  \theoremstyle{plain}
    \newtheorem{thm}{Theorem}[section]
    \newtheorem{prop}[thm]{Proposition}
   \newtheorem{lemma}[thm]{Lemma}
    \newtheorem{subsec}[thm]{}
\theoremstyle{definition}
    \newtheorem{defn}[thm]{Definition}
    \newtheorem{exam}[thm]{Example}
\theoremstyle{remark}
     \newtheorem{remark}[thm]{Remark}
\title{}
\author{}
\date{}
\begin{document}
\title{Cup-product for equivariant Leibniz cohomology and zinbiel algebras}

\author{Goutam Mukherjee}
\email{gmukherjee.isi@gmail.com}
\address{Stat-Math Unit,
 Indian Statistical Institute, Kolkata 700108,
West Bengal, India.}

\author{Ripan Saha}
\email{ripanjumaths@gmail.com}
\address{Department of Mathematics,
Raiganj University, Raiganj, 733134,
West Bengal, India.}

\date{\today}
% \classification{Primary: 55N22, 55N91, ;\ Secondary: 55P91, 55Q91,55M35}
\subjclass[2010]{$17$A$30$, $17$A$32$, $17$B$56$, $55$N$91$.}
\keywords{Group action, Leibniz algebra, equivariant cohomology, zinbiel algebra.}

\thispagestyle{empty}

\begin{abstract}
We study finite group actions on Leibniz algebras, define equivariant cohomology groups associated to such actions. We show that there exists a cup-product operation on this graded cohomology groups which makes it a graded zinbiel algebra.
\end{abstract}
\maketitle

\section{Introduction}
In \cite{L4}, J.-L. Loday introduced some new types of algebras along with their (co)homologies and studied the associated operads. Leibniz algebras and their Koszul duals, zinbiel algebras are examples of such algebras.  Let {\bf Leib} be the category of Leibniz algebras over a fixed field $\mathbb{K}.$ Given any Leibniz algebra $\mathfrak g,$ J.-L. Loday \cite{L3} introduced a cup-product operations 
$$\cup:HL^p(\mathfrak g; A) \times HL^q(\mathfrak g; A)\rightarrow HL^{p+q}(\mathfrak g; A)$$
on the graded Leibniz cohomology $HL^\ast (\mathfrak g; A)$ groups with coefficients in a commutative, associative algebra $A.$ This product is neither associative nor commutative, but satisfies the formula
$$([a]\cup [b]) \cup [c] = [a]\cup ([b]\cup [c]) + (-1)^{|[b]||[c]|} [a]\cup ([c]\cup [b]),$$ which is, the defining relation of a zinbiel algebra. Thus, the author proved that $HL^\ast(\mathfrak g; A)$ is a graded zinbiel algebra. The aim of this paper is to study finite group actions on Leibniz algebras. Let $G$ be a finite group and $\mathfrak g$ be a Leibniz algebra equipped with a given action of $G.$ We discuss examples of such actions and introduce equivariant cohomology groups of a Leibniz algebra $\mathfrak g$ equipped with an action of a finite group $G,$ along the line of Bredon cohomology of a $G$-space \cite{bredon67}. We introduce a cup-product operation in the equivariant context and prove that for a Leibniz algebra $\mathfrak g$ equipped with an action of $G$, equivariant graded Leibniz cohomology groups also admit a graded zinbiel algebra structure.    

\section{Preliminaries}\label{prelim} 
In this section, we recall some definitions, notations and results from \cite{L2},\cite{L3}, \cite{L4}. 

In \cite{L0}, J.-L. Loday observed that for a Lie algebra $\mathfrak g$ if one replaces the exterior product $\wedge$ by the tensor product $\otimes$ in the classical formula for the boundary map $d$ of the Chevalley-Eilenberg complex and modifies the boundary map $d$ so as to put the commutator $[x_i, x_j]$ at the place $i$ when $i < j$ (see 10.6.2.1, \cite{L0}), then one obtains a new complex $(T\mathfrak g, d).$ The only relation that is used to get $d^2= 0$ is
$$[x,[y,z]]= [[x,y],z]-[[x,z],y] ~~\mbox{for}~x,~y,~z \in \mathfrak g.$$
Dualizing this complex one gets the Leibniz cohomology of the Lie algebra $\mathfrak g.$ Thus, Leibniz cohomology is defined for a larger class of algebras: the Leibniz algebras. More explicitly, it is defined as follows.

\begin{defn}   
Let $\mathbb{K}$ be a field. A Leibniz algebra is a vector space $\mathfrak g$ over $\mathbb{K},$ equipped with a bracket operation, which is $\mathbb{K}$-bilinear and satisfies the Leibniz identity: 
$$[x,[y,z]]= [[x,y],z]-[[x,z],y] ~~\mbox{for}~x,~y,~z \in \mathfrak g.$$
A graded Leibniz algebra is a graded $\mathbb{K}$-vector space $\mathfrak g = \{\mathfrak g_i\}_{i\geq 0},$ equipped with a graded bracket operation of degree $0$ which is $\mathbb{K}$-bilinear and satisfies the graded Leibniz identity: $[x,[y,z]]= [[x,y],z]-(-1)^{|y||z|}[[x,z],y]$ for homogeneous elements  $x,~y,~z \in \mathfrak g.$
\end{defn}
Any Lie algebra is automatically a Leibniz algebra, as in the presence of skew symmetry, the Jacobi identity is equivalent to the Leibniz identity. 
\begin{exam}\label{example-1}
Let $(\mathfrak g,d)$ be a differential Lie algebra with the Lie bracket $[,]$. Then $\mathfrak g$ is a Leibniz algebra with the bracket operation $[x,y]_d:= [x,dy]$. The new bracket on $\mathfrak g$ is  called the derived bracket.
\end{exam}
\begin{exam}\label{ex}\label{example-2}
Consider a three dimensional vector space $\mathfrak g$ spanned by \linebreak$\{e_1,~e_2,~e_3\}$ over $\mathbb{C}$. Define a bilinear map $[~,~]: \mathfrak g\times \mathfrak g \longrightarrow \mathfrak g$ by $[e_1,e_3]=e_2$ and $[e_3,e_3]= e_1$, all other products of basis elements being $0$. Then $(\mathfrak g,[~,~])$ is a Leibniz algebra over $\mathbb{C}$ of dimension $3$. The Leibniz algebra $\mathfrak g$ is  nilpotent and is denoted by $\lambda_6$  in the classification of three dimensional nilpotent Leibniz algebras, see \cite{L1,A3}. 
\end{exam}

\begin{defn}
A morphism $\phi : (\mathfrak g_1, [~,~]_1) \rightarrow (\mathfrak g_2, [~,~]_2)$ of Leibniz algebras is a Linear map which preserves the brackets, that is, 
$$\phi ([x,y]_1) = [\phi (x), \phi (y)]_2,~~x, y \in \mathfrak g_1.$$
\end{defn}

Recall that the homology $HL_\ast(\mathfrak g)$ of a Leibniz algebra is defined as follows. To any Leibniz algebra $\mathfrak g$ there is an associated chain complex 
$$CL_\sharp(\mathfrak g): \cdots \rightarrow \mathfrak g^{\otimes n}\stackrel{d}{\rightarrow} \mathfrak g^{\otimes (n-1)}\stackrel{d}{\rightarrow}\cdots \stackrel{d}{\rightarrow} \mathfrak g^{\otimes 2} \stackrel{d}{\rightarrow} \mathfrak g $$
where
\begin{align}\label{boundary-map}
d(x_1, \ldots ,x_n) = \sum_{1\leq i < j \leq n} (-1)^j (x_1, \ldots ,x_{i-1}, [x_i, x_j], x_{i+1}, \ldots , \hat{x}_j, \ldots , x_n).
\end{align}
The map $d$ satisfy $d^2 = 0$ (\cite{L0}). The homology groups of this complex are denoted by $HL_n(\mathfrak g),$ $n \geq 1.$

Next, recall that the Leibniz cohomology $HL^\ast(\mathfrak g; A)$ of a Leibniz algebra $\mathfrak g$ with coefficients in an associative commutative $\mathbb{K}$-algebra $A$ is defined as follows.

Set $CL^n(\mathfrak g; A) = \text{Hom}_{\mathbb K}(\mathfrak g^{\otimes n}, A).$ Then define 
$$\delta : CL^n(\mathfrak g; A)\rightarrow CL^{n+1}(\mathfrak g; A)$$ by $\delta (c) = c\circ d,$ $c \in CL^n(\mathfrak g; A),$ where $d : \mathfrak g^{\otimes(n+1)} \rightarrow \mathfrak g^{\otimes n}$ is the boundary map (\ref{boundary-map}). Explicitly, for any $c \in CL^n(\mathfrak g; A)$ and $(x_1, \ldots ,x_{n+1}) \in \mathfrak g ^{\otimes (n+1)},$
$\delta (c)(x_1, \ldots ,x_{n+1})$ is given by the expression
\begin{align}\label{coboundary-map}
\sum_{1\leq i < j \leq n+1} (-1)^j c(x_1, \ldots ,x_{i-1}, [x_i, x_j], x_{i+1}, \ldots , \hat{x}_j, \ldots , x_{n+1}).
\end{align}
Clearly, $\delta^2 =0$ as $d^2=0,$ and therefore, $(CL^\sharp(\mathfrak g; A), \delta)$ is a cochain complex. Its homology groups are called Leibniz cohomology groups of $\mathfrak g$ with coefficients in $A$ and denoted by $HL^\ast(\mathfrak g ; A).$

\section{Group actions on Leibniz algebras}
The purpose of this section is to introduce finite group actions on Leibniz algebras and provide examples of group actions.

\begin{defn}\label{definition-group-action}
Let $\mathfrak g$ be a Leibniz algebra and $G$ be a finite group. The group $G$ is said to act from the left if there exists a function 
$$\phi : G\times \mathfrak g \rightarrow \mathfrak g,~~ (g, x) \mapsto \phi (g, x) = gx$$ satisfying the following conditions.
\begin{enumerate}
\item For each $g\in G$ the map $x\mapsto gx,$ denoted by $\psi_g$ is linear.
\item $ex= x$ for all $x \in \mathfrak g$, where $e \in G$ is the group identity.
\item $g_1(g_2x) = (g_1g_2)x$ for all $g_1, g_2 \in G$ and $x \in \mathfrak g$.
\item $g[x, y] = [gx, gy]$ for all $g\in G$ and $x, y \in \mathfrak g.$
\end{enumerate}
When $\mathfrak g= \{\mathfrak g_i\}_{i\geq 0}$ is a graded Leibniz algebra, we further assume that for each $g \in G$ the map $\psi_g$ is graded linear of degree $0$.  
\end{defn}

The following is an equivalent formulation of the above definition.
\begin{prop}
Let $G$ be a finite group and $\mathfrak g$ be a Leibniz algebra. Then $G$ acts on $\mathfrak g$ if and only if there exists a group homomorphism 
$$\psi : G \rightarrow \text{Iso}_{Leib} (\mathfrak g, \mathfrak g),~~g \mapsto \psi(g)=\psi_g$$ from the group $G$ to the group of Leibniz algebra isomorphisms from $\mathfrak g$ to $\mathfrak g$, where $\psi_g (x) = gx$ is the left translation by $g.$  
\end{prop}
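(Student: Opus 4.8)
The plan is to verify that the two descriptions of a $G$-action encode exactly the same data, so I would establish the equivalence by matching each of the four axioms of Definition \ref{definition-group-action} with a structural property of the assignment $g \mapsto \psi_g$. The key observation is that $\text{Iso}_{Leib}(\mathfrak g, \mathfrak g)$ is, by definition, the group of bijective Leibniz-algebra morphisms $\mathfrak g \to \mathfrak g$ under composition; so asserting that $\psi$ is a group homomorphism into it simultaneously packages linearity, bracket-preservation, invertibility, and multiplicativity. The proof is therefore a bookkeeping argument, and the only genuine content is the passage from ``morphism'' to ``isomorphism'' in one direction.

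For the forward direction, starting from an action $\phi$, I would set $\psi_g(x) = gx$ and first check that each $\psi_g$ lands in $\text{Iso}_{Leib}(\mathfrak g, \mathfrak g)$. Axiom (1) gives that $\psi_g$ is linear, and axiom (4) gives $\psi_g([x,y]) = g[x,y] = [gx, gy] = [\psi_g(x), \psi_g(y)]$, so $\psi_g$ is a Leibniz morphism. The main point, and the step I expect to require the most care, is bijectivity: here I would use axioms (2) and (3) to compute $\psi_g \circ \psi_{g^{-1}} = \psi_{g g^{-1}} = \psi_e = \Id$ and, symmetrically, $\psi_{g^{-1}} \circ \psi_g = \Id$, so that $\psi_g$ is invertible with inverse $\psi_{g^{-1}}$ and hence an isomorphism. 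Finally, axiom (3) rewritten as $\psi_{g_1 g_2}(x) = g_1(g_2 x) = \psi_{g_1}(\psi_{g_2}(x))$ is precisely the statement that $\psi$ is a group homomorphism.

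For the converse, given a homomorphism $\psi$, I would define $gx := \psi_g(x)$ and read the four axioms back off: linearity of $\psi_g$ yields (1); $\psi_e = \Id$ (since homomorphisms preserve identities) yields (2); the homomorphism property $\psi_{g_1 g_2} = \psi_{g_1}\circ \psi_{g_2}$ yields (3); and the fact that $\psi_g$ is a Leibniz morphism yields (4). In the graded case I would additionally observe that requiring each $\psi_g$ to be graded linear of degree $0$ is exactly the condition that $\psi_g$ be an isomorphism in the graded category, so the argument carries over verbatim once $\text{Iso}_{Leib}$ is interpreted in the graded setting. As noted, the only non-formal step is the production of the inverse $\psi_{g^{-1}}$ in the forward direction, which is where axioms (2) and (3) do the real work.
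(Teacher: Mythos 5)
Your proof is correct and is exactly the routine verification the paper leaves implicit (the paper states this proposition without proof, as an ``equivalent formulation''). The one step with genuine content --- producing the inverse $\psi_{g^{-1}}$ from axioms (2) and (3) to upgrade ``morphism'' to ``isomorphism'' --- is handled properly, and the converse direction correctly recovers all four axioms from the homomorphism property.
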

\begin{remark}
Let ${\mathbb K}[G]$ be the group ring. If a Leibniz algebra $\mathfrak g$ is equipped with an action of $G$ then $\mathfrak g$ may be viewed as a ${\mathbb K}[G]$-module.
\end{remark}

Observe that if $\mathfrak g$ is a Leibniz algebra equipped with an action of a group $G$ as above, then for every subgroup $H \subset G,$ the H-fixed point set $\mathfrak g^H$ is defined by
$$\mathfrak g^H = \{x \in \mathfrak g : hx = x ~~\forall h \in H\}.$$ Clearly, for every subgroup $H \subset G,$ $\mathfrak g^H$ is sub Leibniz algebra of $\mathfrak g$. Moreover, note that if $H, K$ are subgroups of $G$ with $g^{-1}Hg \subset K,~~g\in G,$ then the Leibniz algebra homomorphism $\psi_g$ maps $\mathfrak g^K$ to $\mathfrak g^H$.

\begin{exam}\label{example-3}
Let $V$ be $\mathbb K$-module which is a representation space of a finite group $G.$ On 
$$\bar{T}(V) = V\oplus V^{\otimes 2}\oplus \cdots \oplus V^{\otimes n}\oplus \cdots $$ there is a unique bracket that makes it into a Leibniz algebra and verifies 
$$v_1 \otimes v_2 \otimes  \cdots \otimes v_n = [\cdots [[v_1,v_2],v_3],\cdots ,v_n]~\mbox{for}~v_i\in V~\mbox{and}~i=1,\cdots,n.$$ This is the free Leibniz algebra over the $\mathbb{K}$-module $V$. The linear action of $G$ on $V$ extends naturally to an action on $\bar{T}(V)$ and the bracket defined above satisfies the conditions of Definition \ref{definition-group-action}. Thus, $(G, \bar{T}(V))$ is an action of $G$ on the free Leibniz algebra $\bar{T}(V)$.  
\end{exam}

\begin{exam}\label{example-4}
Let $(\mathfrak g,d)$ be a differential Lie algebra with the Lie bracket $[~,~]$. Assume that a finite group $G$ acts linearly on $\mathfrak g$ such that 
\begin{enumerate}
\item $[gx, gy] = g[x, y]$ for all $g \in G$ and $x, y \in \mathfrak g$,
\item $d(gx) = gd(x)$ for all $g\in G$ and $x \in \mathfrak g.$ 
\end{enumerate}
Then, the group $G$ acts on the Leibniz algebra $(\mathfrak g, [~, ~]_d),$ where $[x,y]_d:= [x,dy]$ is the derived bracket (cf. Example \ref{example-1}).
\end{exam}

Our next example is based on a specific case of the above Example.
\begin{exam}\label{example-5}
Let $V$ be a vector space over a field $\mathbb K.$  Assume that a finite group $G$ acts linearly on $V.$ Let
$$C^k(V) = \{\alpha : V \times \cdots \times V \rightarrow V| \alpha ~~~\mbox{is linear in each argument}\}$$
be the set of all $\mathbb K$-multilinear maps on $V.$ For $\alpha \in C^k(V)$ and $\beta \in C^l(V)$ let $\alpha\circ \beta \in C^{k+l-1}(V)$ be the element defined by
\begin{align*}
& \alpha\circ \beta (x_1, \ldots , x_{k+l-1})\\
& = \sum_{i=1}^k(-1)^{(i-1)(l-1)} \alpha(x_1, \ldots, x_{i-1}, \beta(x_i, \ldots , x_{i+l-1}), x_{i+1}, \ldots , x_{k+l-1})
\end{align*}
and let $[\alpha,\beta] \in C^{k+l-1}(V)$ be the Gerstenhaber bracket defined by:
$$[\alpha, \beta] := \alpha\circ \beta - (-1)^{(k-1)(l-1)} \beta\circ \alpha.$$ Let $C^\bullet(V) = \oplus_kC^k(V).$ Then, if we declare an element of $C^k(V)$ to have degree $k-1,$ the Gerstenhaber bracket defined a structure of graded Lie algebra on $C^\bullet(V) = \oplus_kC^k(V).$ 

Next observe that the linear action of $G$ on $V$ induces an action on $C^k(V)$ for each $k$ and is given by $(g, \alpha) \mapsto g(\alpha),$ where
$$g(\alpha)(x_1, \ldots , x_k) := g(\alpha (g^{-1}x_1, \ldots , g^{-1}x_k)).$$ Thus, an element $\alpha \in C^k(V)$ is invariant with respect to the above action if and only if $\alpha : V\times \cdots \times V \rightarrow V$ is an equivariant multilinear map where $G$ acts component wise on any product of $V.$ Moreover, it is straight forward to verify that the above action on the graded vector space $C^\bullet(V)$ satisfies $[g\alpha,g\beta] = g[\alpha, \beta].$   

Assume that there is an element $\alpha\in C^2(V)$ which is invariant with respect to the above action and defines an associative algebra structure on $V$. The last assertion is equivalent to assume that $[\alpha, \alpha] = 0.$ For such an element $\alpha \in C^2(V)$  we define a map $d_{\alpha} : C^\ast(V) \rightarrow C^{\ast +1}(V)$ by $d_{\alpha}(\beta) = [\alpha, \beta].$ Then, $d_{\alpha}:C^\ast(V)\rightarrow C^{\ast +1}(V)$ is equivariant and by graded Jacobi identity $d_{\alpha}^2 = 0.$ Thus,  $(C^\bullet (V), [~,~], d_{\alpha})$ is a differential graded Lie algebra equipped with a linear action of $G$ such that $[g\alpha,g\beta] = g[\alpha, \beta]$ for all $g\in G,$ $\alpha \in C^k(V)$ and $\beta \in C^l(V).$ On $C^\bullet (V)$ we consider the derived bracket 
$$[\beta, \gamma]_{d_{\alpha}} := [\beta, d_{\alpha}\gamma].$$ Then, $(C^\bullet (V), [~,~]_{d_{\alpha}})$ is a graded Leibniz algebra equipped with an action of the group $G$.
\end{exam}

Finally, we discuss two geometric examples.  

\begin{exam}\label{example-6}
Recall that every Lie algebra, in particular, is a Leibniz algebra as the in the presence of skew symmetry, the Leibniz identity reduces to the Jacobi identity. Let $M$ be a smooth manifold equipped with a smooth action of $G.$  For each $g\in G,$ let $l_g : M \rightarrow M$ denote the left translation by $g,$ that is, $l_g (x) = gx,~~ x \in M.$ Consider the Lie algebra $(\chi (M), [~,~])$ of vector fields on $M,$ where for vector fields $X, Y \in \chi (M)$ their Lie bracket $[X, Y]$ is a vector field which acts on smooth functions $f \in C^\infty(M)$  by $[X, Y] (f) := X(Yf) -Y(Xf).$ Define  $G \times \chi (M) \rightarrow \chi (M)$ by $(g, X) \mapsto (l_g)_\ast (X),$ where $(l_g)_\ast (X)$ is the push forward by $l_g.$ Explicitly, for $f \in C^\infty(M),$ $(l_g)_\ast (X)(f) = X(f\circ l_g).$ Then, it is easy to check that $(l_g)_\ast ([X, Y]) = [(l_g)_\ast(X),(l_g)_\ast(Y)]$ and hence $(G, \chi (M))$ is an action of $G$ on the Lie algebra $\chi (M).$     
\end{exam}

The following discussion is a prelude to our next example.

\begin{defn}\label{action-on-forms}
Let $G$ be a finite group and $M$ is a smooth $G$-manifold. Then for any $k \geq 0,$ there is an action of $G$ on the space of $k$-forms $\Omega^k (M)$ on $M$, given by
$$ G \times \Omega^k (M) \rightarrow \Omega^k (M), ~(g , \alpha)(x) := (d l_{g^{-1}})^* ~\alpha (g^{-1}x),$$
for $\alpha \in \Omega^k (M)$ and $x \in M$. Moreover, there is an action of $G$ on the space of vector fields $\mathcal{X} (M)$ on $M$, namely,
$$ G \times \mathcal{X} (M) \rightarrow \mathcal{X} (M), ~(g, X) (x) := (dl_g)~ X (g^{-1}x),$$
for $X \in \mathcal{X} (M)$ and $x \in M$. This action is the same as the action of $G$ on $\chi (M)$ by push forward by $l_g$ as discussed in the above example.
\end{defn}

\begin{lemma}\label{lemma-invariant}
The contraction operator and the de Rham differential operator satisfies the following properties. For any  $\alpha \in \Omega^k (M)$, $X \in \mathcal{X}(M)$ and $g \in G$,
\begin{itemize}
 \item[(i)] $i_{(g, X)} (g, \alpha) = (g, i_X \alpha)$
 \item[(ii)] $d (g, \alpha) = (g, d \alpha)$
 \item[(iii)] $\mathcal{L}_{(g, X)} (g, \alpha) = ( g, \mathcal{L}_X \alpha)$
 \item[(iv)] If $\Pi $ is a $G$-invariant $(k+1)$-vector field, that is, $\Pi (gx) = (dl_g) ~ \Pi (x)$, for all $g \in G$ and $x \in M$, then $\Pi^\sharp (g, \alpha) = (g, \Pi^\sharp \alpha).$
\end{itemize}
\end{lemma}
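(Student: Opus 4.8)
The plan is to recognize the $G$-action of Definition~\ref{action-on-forms} as pullback/pushforward along the diffeomorphism $l_g$, thereby reducing all four assertions to the classical naturality of the interior product, the exterior derivative, the Lie derivative, and contraction with a multivector field. Concretely, since $l_{g^{-1}} = (l_g)^{-1}$, the defining formula $(g,\alpha)(x) = (dl_{g^{-1}})^\ast\,\alpha(g^{-1}x)$ says precisely that $(g,\alpha) = (l_{g^{-1}})^\ast\alpha = (l_g)_\ast\alpha$; that is, the action on forms is pullback by $l_{g^{-1}}$, which for the diffeomorphism $l_g$ is the same as pushforward by $l_g$. Likewise $(g,X)(x) = (dl_g)\,X(g^{-1}x)$ exhibits the action on vector fields as the pushforward $(l_g)_\ast X$. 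Once this identification is in place, each item becomes a statement about the single diffeomorphism $\varphi := l_g$, and the whole point is that $\varphi_\ast$ is a natural operation commuting with all canonical constructions provided one transforms every argument simultaneously.

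First I would dispose of (ii), which is cleanest: the exterior derivative commutes with pullback, $d(\varphi^\ast\beta) = \varphi^\ast(d\beta)$, and applying this to $\varphi = l_{g^{-1}}$ gives $d(g,\alpha) = (g, d\alpha)$ at once. For (i) I would invoke the naturality of the interior product under a diffeomorphism, $\varphi_\ast(i_X\omega) = i_{\varphi_\ast X}(\varphi_\ast\omega)$; taking $\varphi = l_g$, so that $\varphi_\ast X = (g,X)$ and $\varphi_\ast\omega = (g,\omega)$, yields $i_{(g,X)}(g,\alpha) = (g, i_X\alpha)$. The only care needed is the base-point bookkeeping: pointwise both sides transport $X(g^{-1}x)$ and $\alpha(g^{-1}x)$ through $dl_g$ and its transpose, and the chain rule applied to $l_g\circ l_{g^{-1}} = \operatorname{id}$ makes these two transports cancel correctly.

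With (i) and (ii) in hand, (iii) follows formally from Cartan's magic formula $\mathcal{L}_X = i_X\circ d + d\circ i_X$: expanding $\mathcal{L}_{(g,X)}(g,\alpha) = i_{(g,X)}\,d(g,\alpha) + d\,i_{(g,X)}(g,\alpha)$ and substituting (ii) into the first summand and (i) into both collapses the expression to $(g, i_X d\alpha) + (g, d\,i_X\alpha) = (g, \mathcal{L}_X\alpha)$. Finally, (iv) is the multivector analogue of (i): reading $\Pi^\sharp$ as contraction of the $(k+1)$-vector field $\Pi$ against a $k$-form to produce a vector field, one has $(\varphi_\ast\Pi)^\sharp(\varphi_\ast\alpha) = \varphi_\ast(\Pi^\sharp\alpha)$, and the hypothesis $\Pi(gx) = (dl_g)\,\Pi(x)$ says exactly that $\Pi$ is invariant, $(l_g)_\ast\Pi = \Pi$. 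Hence $(\varphi_\ast\Pi)^\sharp = \Pi^\sharp$ and the identity reduces to $\Pi^\sharp(g,\alpha) = (g, \Pi^\sharp\alpha)$, where the $(g,-)$ on the right is the vector-field action.

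The genuinely substantive point, and the one I would treat most carefully, is the uniform verification of these naturality identities together with the matching of the two action conventions: one must confirm that the functorial action used for forms (pullback by $l_{g^{-1}}$) and the one used for vector fields (pushforward by $l_g$) are in fact the action of the \emph{same} diffeomorphism, so that the naturality squares commute with no sign or base-point discrepancy. Since $l_{g^{-1}} = (l_g)^{-1}$, pullback by $l_{g^{-1}}$ and pushforward by $l_g$ coincide on forms, which removes this as a genuine obstacle; everything that remains is a routine pointwise computation with the chain rule.
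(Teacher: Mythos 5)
Your proposal is correct and follows essentially the same route as the paper: both reduce each identity to the compatibility of $i_X$, $d$, $\mathcal{L}_X$ and $\Pi^\sharp$ with the single diffeomorphism $l_g$ (equivalently, with pullback by $l_{g^{-1}}$ on forms and pushforward by $l_g$ on vector fields), and both deduce (iii) from (i), (ii) and Cartan's formula $\mathcal{L}_X = i_X d + d i_X$. The only difference is one of packaging: where you cite the standard naturality of these operators under a diffeomorphism, the paper verifies those same identities by direct pointwise computation (and, for (ii), by checking the claim on functions and on wedge products).
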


\begin{proof}
(i) For any $x \in M$ and $X_1, \ldots, X_{k-1} \in T_xM$, we have
\begin{align*} 
&(i_{(g, X)} (g, \alpha))(x) (X_1, \ldots, X_{k-1}) \\
=~& (g, \alpha) (x) \big( (g, X)(x) , X_1,   \ldots, X_{k-1}   \big) \\
=~& (dl_{g^{-1}})^* \alpha (g^{-1} x) ~\big(  (dl_g) X(g^{-1}x), X_1, \ldots, X_{k-1}     \big) \\
=~&  \alpha (g^{-1} x) ~  \big( X(g^{-1}x), (dl_{g^{-1}}) X_1, \ldots, (dl_{g^{-1}}) X_{n-1}  \big) \\
=~& (i_X \alpha )(g^{-1}x) \big(  (dl_{g^{-1}}) X_1, \ldots, (dl_{g^{-1}}) X_{n-1}   \big) \\
=~& ((dl_{g^{-1}})^*~  (i_X \alpha) (g^{-1}x)) (X_1, \ldots, X_{k-1})\\
=~& (g, i_X \alpha)(x) (X_1, \ldots, X_{k-1}).
\end{align*}

(ii) Note that the action of $G$ on $C^\infty(M)$ is given by $(g, f) := (l_{g^{-1}})^* f$. Therefore, for any $x \in M$, 
$$ (g, df)(x) = (dl_{g^{-1}})^* ~df (g^{-1}x) = d ( (l_{g^{-1}})^* f ) (x) =  d (g, f) (x).$$
The result now follows from the observation that for any $\alpha, \beta \in \Omega^\bullet (M)$,
\begin{align*}
 &(g, \alpha \wedge \beta)(x) \\
=~& (dl_{g^{-1}})^* ~(\alpha \wedge \beta)(g^{-1}x) \\
=~& (dl_{g^{-1}})^*  ~ (   \alpha (g^{-1}x) \wedge \beta (g^{-1}x)) \\
=~& (dl_{g^{-1}})^*  \alpha (g^{-1}x) \wedge (dl_{g^{-1}})^* \beta (g^{-1}x) = \big((g, \alpha) \wedge (g, \beta) \big)(x).
\end{align*}

(iii) It follows from part (i) and (ii) and the Cartan magic formula
$$ \mathcal{L}_X = i_X d + d i_X .$$

(iv) For any $x \in M$ and $\beta_x \in T_x^*M$, we have
\begin{align*}
 &\langle \Pi^\sharp (g, \alpha) (x) , \beta_x   \rangle \\
=~& (i_{(g, \alpha) (x)} \Pi(x)) (\beta_x) \\
=~& \Pi (x) \big( (dl_{g^{-1}})^* \alpha (g^{-1} x) , \beta_x \big) \\
=~& ( dl_g ) ~\Pi (g^{-1} x) \big( (dl_{g^{-1}})^* \alpha (g^{-1} x) , \beta_x \big)   \hspace*{1cm} \text{(since $\Pi$ is $G$-invariant)} \\
=~& \Pi (g^{-1} x) \big( \alpha (g^{-1} x) ,  ( dl_g )^* \beta_x \big) \\
=~& \langle  (dl_g) ~(i_{\alpha (g^{-1} x)} \Pi (g^{-1} x)), \beta_x  \rangle \\
=~& \langle  (dl_g) ~( \Pi^\sharp \alpha) (g^{-1} x), \beta_x  \rangle = \langle (g, \Pi^\sharp \alpha) (x), \beta_x \rangle.
\end{align*}
\end{proof}

Recall that a {\it Nambu-Poisson} manifold is a generalization of the notion of {\it Poisson} manifolds and is defined as follows \cite{book-vaisman}, \cite{ilmp}. 
\begin{defn}\label{nambu-poisson manifold}
Let $M$ be a smooth manifold.  A  {\it{Nambu-Poisson bracket}} of order $n$ ($2\leq n \leq ~\mbox{dim}~ M$) on $M$ is an $n$-multilinear
mapping 
$$\{,\ldots,\}\colon C^\infty(M)\times \cdots\times C^\infty(M)
\longrightarrow C^\infty(M)$$ satisfying the following conditions: 
\begin{enumerate}
\item Skew-symmetric: $ \{f_1,\ldots,f_n\}= \textup{sign}(\sigma)\{f_{\sigma(1)},\ldots,f_{\sigma(n)}\}$
for any $\sigma\in \Sigma_n;$
\item Leibniz rule: $\{fg,f_2,\ldots,f_n\}= f\{g ,f_2,\ldots,f_n\}+ g\{f, f_2, \ldots,f_n\};$
\item Fundamental identity: $$\{f_1,\ldots,f_{n-1},\{g_1,\ldots,g_n\}\}=
\displaystyle{\sum_{i=1}^n}\{g_1,\ldots, g_{i-1}, \{f_1,\ldots,f_{n-1},g_i\},\ldots,g_n\}$$
\end{enumerate}
for $f_i,g_j,f, g\in C^\infty(M).$ The pair $(M, \{~, \ldots, ~\})$ is called a {\it{Nambu-Poisson manifold of order $n$}}.  
\end{defn}
\begin{remark}
%\noindent
Recall \cite{book-vaisman} that Poisson manifolds are Nambu-Poisson manifolds of order $2$.  
\end{remark}

\noindent Given a Nambu-Poisson bracket on $M$, there exists an $n$-vector field $P\in\Gamma (\Lambda^n TM),$ called the Nambu-Poisson tensor corresponding to the given bracket, and is defined by $P(df_1,\ldots,df_n)=\{f_1,\ldots,f_n\},$ for $f_1,\ldots,f_n\in C^\infty(M)$. Note that $P$ induces a bundle map $P^\sharp: \Lambda^{n-1}T^\ast M\rightarrow TM$ given by
$$\big\langle \beta, P^\sharp (\alpha_1 \wedge \cdots \wedge \alpha_{n-1})\big\rangle = P (\alpha_1, \ldots , \alpha_{n-1}, \beta),$$
for all $\alpha_1, \ldots , \alpha_{n-1}, \beta \in \Omega^1(M).$

Recall the following definition from \cite{wade}.
\begin{defn}
 A {\it (left) Leibniz algebroid} over a smooth manifold $M$ is a smooth vector bundle $A$ over $M$ together with a bracket $[~,~]$ on the space $\Gamma{A}$ of smooth sections of $A$ and a bundle map $\rho : A \rightarrow TM$, called the {\it anchor} 
such that the bracket satisfies
\begin{enumerate}
\item (left) Leibniz identity: $ [X, [Y, Z]] = [[X, Y], Z] + [Y, [X, Z]],~~X, Y, Z \in \Gamma{A};$
\item $[X,fY] = f[X, Y] + (\rho(X)f)Y$;
\item $\rho ([X, Y]) = [\rho (X), \rho (Y)], ~~X, Y \in \Gamma A,~~f\in C^\infty(M).$
\end{enumerate}
\end{defn}

We recall the following result from \cite{wade}, \cite{ilmp}.

Let $M$ be a Nambu-Poisson manifold of order $n$ with the corresponding Nambu tensor $\Pi$. Then the bundle $\bigwedge^{n-1} T^*M$ carries a Leibniz algebroid structure with bracket
$$ [\alpha, \beta] := \mathcal{L}_{\Pi^\sharp \alpha} \beta - i_{\Pi^\sharp \beta} d \alpha$$
on the space $\Omega^{n-1}(M)$ of $(n-1)$-forms on $M$ and the anchor is given by the bundle map $\Pi^\sharp.$ Thus $\Omega^{n-1}(M)$ is a Leibniz algebra with respect this bracket.

\begin{exam}\label{example-7}
Suppose $G$ is a finite group. Let $M$ be a Nambu-Poisson manifold of order $n$ with the associated Nambu tensor $\Pi.$ Assume that $G$ acts smoothly on $M$ and $\Pi$ is $G$-invariant with respect to the action of $G$ as defined in Definition \ref{action-on-forms}. Then, it follows from Lemma \ref{lemma-invariant} that for any $\alpha, \beta \in \Omega^{n-1} (M)$
and $g \in G$,
\begin{align*}
 [(g, \alpha) , (g, \beta)] =~& \mathcal{L}_{\Pi^\sharp (g, \alpha)} (g, \beta) - i_{\Pi^\sharp (g, \beta)} d (g, \alpha)\\
=~& \mathcal{L}_{(g, \Pi^\sharp \alpha)} (g, \beta) - i_{(g, \Pi^\sharp \beta)} d (g, \alpha) = (g, [\alpha, \beta]).
\end{align*}
Thus, with the bracket $[~,~]$ as defined above $\Omega^{n-1}(M)$ is a Leibniz algebra equipped with the action of the given group $G$. 

\end{exam}

\section{Equivariant cohomology of a Leibniz algebra equipped with a group action}
In this section we introduce equivariant cohomology groups of a Leibniz algebra equipped with an action of a finite group following \cite{bredon67},\cite{illman}.

Let $G$ be a finite group. Recall that the category of canonical orbits of $G$, denoted by $O_G,$ is a category whose  objects are left cosets $G/H$, as $H$ runs over the all subgroups of $G$. Note that the group $G$ acts on the set $G/H$  by left translation. A morphism from $G/H$ to $G/ K$ is a $G$-map. Recall that such a morphism determines and is determined by a subconjugacy relation $g^{-1}Hg\subseteq K$ and is given by $\hat{g}(eH)=gK$. We denote this morphism by $\hat{g}$ \cite{bredon67}.

\begin{defn}
An $O_G$-module is a contravariant functor  $M : O_G \rightarrow \text{\bf Mod},$ where {\bf Mod} is the category of modules over ${\mathbb K}$. The category whose objects are $O_G$-modules and with morphisms the natural transformations between $O_G$-modules is an abelian category denoted by ${\mathcal C}_G.$  Let {\bf Comm} be the category of  associative commutative algebras over $\mathbb{K}.$  An $O_G$-algebra is a contravariant functor $A : O_G \rightarrow \text{ \bf Comm}.$ Similarly, an $O_G$-Leibniz algebra is a contravariant functor $L: O_G \rightarrow {\bf Leib}.$ If $\mathfrak g$ is a Leibniz algebra equipped with an action of $G,$ then we have contravariant functor $\Phi \mathfrak g : O_G \rightarrow \text{ \bf Leib},$ given by $\Phi \mathfrak g (G/H) = \mathfrak g^H$ and for a morphism $\hat{g}: G/H \rightarrow G/ K$ corresponding to a sub conjugacy relation $g^{-1}Hg\subseteq K,$ $\Phi \mathfrak g (\hat{g}) = \psi_g : \mathfrak g^K \rightarrow \mathfrak g^H.$ Thus, $\Phi \mathfrak g$ is an $O_G$-Leibniz algebra, which will be referred to as the $O_G$-Leibniz algebra associated to $\mathfrak g$.
\end{defn}

We now proceed to define the notion of equivariant cohomology of a Leibniz algebra $\mathfrak g$ equipped with an action of a finite group $G.$ Let $A : O_G \rightarrow \text{ \bf Comm}$ be an $O_G$-algebra. Let the product in $A(G/H)$ is denoted by $\mu_H : A(G/H) \otimes A(G/H) \rightarrow A(G/H).$ Note that for every morphism $\hat{g}: G/H \rightarrow G/ K$ in $O_G$ corresponding to a sub conjugacy relation $g^{-1}Hg\subseteq K,$ we have 
$$\mu_H \circ (A(\hat{g})\otimes A(\hat{g})) = A(\hat{g}) \circ \mu_K.$$

\begin{defn}
For every $n \geq 1,$ let $\underline{CL}_n(\mathfrak g)$ be the $O_G$-module defined by $\underline{CL}_n(\mathfrak g)(G/H) := CL_n(\mathfrak g^H)= (\mathfrak g^H)^{\otimes n}$ and 
$\underline{CL}_n(\mathfrak g)(\hat{g}): CL_n(\mathfrak g^K)\rightarrow CL_n(\mathfrak g^H)$ is given by $(\psi_g)^{\otimes n}.$ The boundary map (\ref{boundary-map}) induces a natural transformation $\underline{d} : \underline{CL}_{n+1}(\mathfrak g)\rightarrow \underline{CL}_n(\mathfrak g),$ where $\underline{d}(G/H) = d_H$ is the boundary map for the Leibniz algebra $\mathfrak g ^H.$ Clearly, $\underline{d}\circ \underline{d} = 0.$ This gives a chain complex in the abelian category of $O_G$-modules. Let 
$$CL^n_G(\mathfrak g; A):= \text{Hom}_{\mathcal C _G}( \underline{CL}_n(\mathfrak g), A).$$ We have an induced homomorphism
$\delta :CL^n_G(\mathfrak g; A)\rightarrow CL^{n+1}_G(\mathfrak g; A)$ given by $\delta (c) := c\circ \underline{d}$ for any natural transformation $c \in CL^n_G(\mathfrak g; A).$ Thus, we have a cochain complex $CL^\sharp_G(\mathfrak g; A)=\{CL^n_G(\mathfrak g; A), \delta\}.$ The homology groups of this cochain complex are equivariant Leibniz algebra cohomology groups of $(G, \mathfrak g)$ with coefficients in $A$ and denoted by $HL_G^n(\mathfrak g; A).$ 
\end{defn}

The following is an equivalent formulation of the groups $HL_G^n(\mathfrak g; A).$

Set $S^n (\mathfrak g; A) = \oplus_{H < G}CL^n(\mathfrak g^H; A(G/H))$ and define 
$$\delta : S^n (\mathfrak g; A)\rightarrow S^{n+1} (\mathfrak g; A)$$ by $\delta = \oplus_{H < G}\delta _H,$ where $\delta_H: CL^n(\mathfrak g^H; A(G/H))\rightarrow CL^{n+1}(\mathfrak g^H; A(G/H))$ is the non-equivariant coboundary map (\ref{coboundary-map}) for the Leibniz algebra $\mathfrak g^H.$
Clearly, $\{S^n (\mathfrak g; A), \delta\}$ is a cochain complex. We define a subcomplex of this cochain complex as follows. 
\begin{defn}
A cochain $c=\{c_H\} \in S^n (\mathfrak g; A)$ is said to be invariant under the action of $G$ if for every morphism $\hat{g} : G/H \rightarrow G/K$, corresponding to a subconjugacy relation $g^{-1}Hg\subset K$ following holds:
$$c_H \circ(\psi_g)^{\otimes n}= A(\hat{g})\circ c_K.$$
\end{defn}

\begin{lemma}
The set of all invariant $n$-cochains is a a subgroup $S^n _G(\mathfrak g; A)$ of $S^n (\mathfrak g; A).$ If $c=\{c_H\} \in S^n (\mathfrak g; A)$ is invariant then $\delta (c)=\{\delta_H (c_H)\}\in S^{n+1} (\mathfrak g; A)$ is an invariant $(n+1)$-cochain.
\end{lemma}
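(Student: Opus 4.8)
The plan is to verify the two assertions separately, both of which reduce to the $\mathbb{K}$-linearity of the invariance condition and the naturality of the boundary map.

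For the subgroup claim, I would fix a morphism $\hat{g} : G/H \to G/K$ coming from a subconjugacy relation $g^{-1}Hg \subset K$ and observe that the defining equation
$$c_H \circ (\psi_g)^{\otimes n} = A(\hat{g}) \circ c_K$$
is $\mathbb{K}$-linear in the tuple $c = \{c_H\}$: the assignment $c \mapsto c_H \circ (\psi_g)^{\otimes n}$ depends $\mathbb{K}$-linearly on the component $c_H$, while $c \mapsto A(\hat{g}) \circ c_K$ depends $\mathbb{K}$-linearly on the component $c_K$. Hence for each such $\hat{g}$ the set of cochains satisfying the equation is the kernel of a $\mathbb{K}$-linear map on $S^n(\mathfrak g; A)$, and $S^n_G(\mathfrak g; A)$, being the intersection over all morphisms $\hat{g}$ in $O_G$ of these kernels, is a $\mathbb{K}$-submodule, in particular a subgroup, of $S^n(\mathfrak g; A)$.

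For the second assertion, let $c = \{c_H\}$ be invariant and fix a morphism $\hat{g} : G/H \to G/K$ as above. The goal is to show
$$\delta_H(c_H) \circ (\psi_g)^{\otimes (n+1)} = A(\hat{g}) \circ \delta_K(c_K).$$
Writing $\delta_H(c_H) = c_H \circ d_H$ and $\delta_K(c_K) = c_K \circ d_K$, the left-hand side equals $c_H \circ d_H \circ (\psi_g)^{\otimes (n+1)}$. The crucial ingredient is the compatibility of the boundary maps with $\psi_g$, namely
$$d_H \circ (\psi_g)^{\otimes (n+1)} = (\psi_g)^{\otimes n} \circ d_K,$$
which is precisely the statement that $\underline{d}$ is a natural transformation of $O_G$-modules, already established when the chain complex $\underline{CL}_\bullet(\mathfrak g)$ was constructed; it holds because $\psi_g$ is a morphism of Leibniz algebras and the boundary map (\ref{boundary-map}) is built entirely from the bracket. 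Substituting, the left-hand side becomes $c_H \circ (\psi_g)^{\otimes n} \circ d_K$, and invariance of $c$ gives $c_H \circ (\psi_g)^{\otimes n} = A(\hat{g}) \circ c_K$, so the expression equals $A(\hat{g}) \circ c_K \circ d_K = A(\hat{g}) \circ \delta_K(c_K)$, as required.

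There is no serious obstacle here: the whole argument is a diagram chase gluing two commuting squares along $(\psi_g)^{\otimes n}$, one expressing naturality of the boundary map and the other expressing invariance of $c$. The only point that must be secured in advance is the naturality identity $d_H \circ (\psi_g)^{\otimes (n+1)} = (\psi_g)^{\otimes n} \circ d_K$; once this is in hand the invariance of $\delta(c)$ is immediate. I would therefore make sure to invoke it (or, if one prefers a self-contained account, to verify it directly from the explicit formula for $d$ together with $\psi_g[x,y] = [\psi_g x, \psi_g y]$) before running the computation.
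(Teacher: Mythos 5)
Your proof is correct and follows essentially the same route as the paper: the subgroup claim is immediate from linearity, and the invariance of $\delta(c)$ comes from combining the invariance of $c$ with the compatibility $d_H \circ (\psi_g)^{\otimes (n+1)} = (\psi_g)^{\otimes n} \circ d_K$. The only difference is presentational --- you factor that compatibility through the naturality of $\underline{d}$, while the paper verifies it inline by expanding the coboundary formula and using $[gx_i,gx_j]=g[x_i,x_j]$ term by term.
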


\begin{proof}
It is clear that $S^n _G(\mathfrak g; A)$ is a subgroup of $S^n(\mathfrak g; A)$ as  $A(\hat{g})$ is a homomorphism for every $\hat{g} :G/H \rightarrow G/K.$
Let $c=\{c_H\} \in S^n (\mathfrak g; A)$ be invariant and $\hat{g} : G/H \rightarrow G/K$ be a morphism in $O_G$ corresponding to a subconjugacy relation $g^{-1}Hg\subset K$ . Thus, for every $(x_1, \ldots, x_n) \in (\mathfrak g ^K)^{\otimes n}$ we have
\begin{align}\label{equality-one}
& c_H (\psi_g(x_1), \ldots, \psi_g(x_n))= A(\hat{g})(c_K (x_1, \ldots , x_n)).
\end{align}
Next, recall from (\ref{coboundary-map}) that
\begin{align*} 
& \delta_H (c_H)(\psi_g(x_1), \ldots , \psi_g(x_{n+1}))\\
&= \sum_{1\leq i < j \leq n+1} (-1)^j c_H(gx_1, \ldots, gx_{i-1}, [gx_i, gx_j], \ldots , \widehat{gx_j}, \ldots , gx_{n+1})\\
&= \sum_{1\leq i < j \leq n+1} (-1)^j A(\hat{g})c_K(x_1, \ldots ,x_{i-1}, [x_i, x_j], x_{i+1}, \ldots , \hat{x}_j, \ldots , x_{n+1})\\
&= A(\hat{g})\delta _K(c_K)( x_1, \ldots, x_{n+1}).
\end{align*}
Therefore, $\delta_H (c_H)\circ (\psi_g)^{\otimes n+1} = A(\hat{g})\circ \delta _K(c_K)$ and hence,
$$\{\delta_H(c_H)\} \in S^{n+1}_G (\mathfrak g; A).$$
\end{proof}

Thus, we have a cochain subcomplex $S^\sharp_G(\mathfrak g ; A) = \{S_G^n (\mathfrak g; A), \delta\}.$

\begin{thm}\label{equivalent-formulation}
Let $\mathfrak g$ be a Leibniz algebra with a given action of $G.$ For any $O_G$-algebra $A$ we have an isomorphism
$$H_n(S^\sharp_G(\mathfrak g ; A)) \cong HL^n_G(\mathfrak g; A)$$ for all $n$. 
\end{thm}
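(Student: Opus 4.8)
The plan is to produce an isomorphism of cochain complexes $\Theta \colon CL^\sharp_G(\mathfrak g; A) \to S^\sharp_G(\mathfrak g; A)$ in each degree and then pass to homology. The starting observation is purely formal: by definition an element $c \in CL^n_G(\mathfrak g; A) = \Hom_{\mathcal C_G}(\underline{CL}_n(\mathfrak g), A)$ is a natural transformation of contravariant functors on $O_G$, hence consists of a family of $\mathbb K$-linear maps
$$c_{G/H} \colon \underline{CL}_n(\mathfrak g)(G/H) = (\mathfrak g^H)^{\otimes n} \longrightarrow A(G/H),$$
one for each object $G/H$ of $O_G$. I would define $\Theta_n(c) := \{c_{G/H}\}_{H}$, regarded as an element of $S^n(\mathfrak g; A) = \oplus_{H} CL^n(\mathfrak g^H; A(G/H))$.

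First I would show that $\Theta_n$ lands in, and surjects onto, the invariant subgroup $S^n_G(\mathfrak g; A)$. The naturality of $c$ with respect to a morphism $\hat g \colon G/H \to G/K$ (coming from $g^{-1}Hg \subseteq K$) is exactly the commutativity of the square whose horizontal arrows are $c_K$ and $c_H$ and whose vertical arrows are $\underline{CL}_n(\mathfrak g)(\hat g) = (\psi_g)^{\otimes n}$ and $A(\hat g)$; spelled out, this reads
$$c_H \circ (\psi_g)^{\otimes n} = A(\hat g) \circ c_K,$$
which is precisely the invariance condition defining $S^n_G(\mathfrak g; A)$. Conversely, any invariant family $\{c_H\} \in S^n_G(\mathfrak g; A)$ satisfies all these compatibilities and therefore assembles into a natural transformation. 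Thus $\Theta_n$ is a bijection between $\Hom_{\mathcal C_G}(\underline{CL}_n(\mathfrak g), A)$ and $S^n_G(\mathfrak g; A)$, clearly $\mathbb K$-linear, hence an isomorphism of $\mathbb K$-modules.

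Next I would check that $\Theta = \{\Theta_n\}$ commutes with the coboundary operators. On the left the coboundary is $\delta(c) = c \circ \underline d$, and since $\underline d$ is the natural transformation whose value at $G/H$ is the boundary map $d_H$ of $\mathfrak g^H$, the component of $c \circ \underline d$ at $G/H$ is $c_{G/H} \circ d_H = \delta_H(c_{G/H})$. On the right the coboundary is $\delta = \oplus_H \delta_H$ by definition. Hence $\Theta_{n+1}(\delta c) = \{\delta_H(c_{G/H})\} = \delta(\Theta_n(c))$, so $\Theta$ is an isomorphism of cochain complexes. Passing to homology yields $H_n(S^\sharp_G(\mathfrak g; A)) \cong HL^n_G(\mathfrak g; A)$ for all $n$.

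The only point requiring care---rather than a genuine obstacle---is the bookkeeping in the first step: one must verify that the naturality condition for every morphism of $O_G$, including the automorphisms $\hat g \colon G/H \to G/H$ induced by $g \in N_G(H)$ and the composites arising from chains of subconjugacy relations, is captured by the single invariance equation, and that the assignment sending an invariant family $\{c_H\}$ to the induced natural transformation is well defined and compatible with composition in $O_G$. Given the preceding lemma, which already shows that $\delta$ preserves invariance, this reduces to unwinding the definition of a natural transformation of $O_G$-modules, so no homological input beyond the degreewise identification is needed.
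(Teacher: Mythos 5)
Your proposal is correct and coincides with the paper's own argument: the paper defines the same degreewise map $\alpha_n(F) := \{F(G/H)\}$, identifies naturality with the invariance condition, checks compatibility with the coboundaries via $\delta_H F(G/H) = F(G/H)\circ d_H$, and exhibits the inverse $\{c_H\}\mapsto C$ with $C(G/H):=c_H$ exactly as you do. No further comment is needed.
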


\begin{proof}
Let $F \in CL^n_G(\mathfrak g; A).$ Note that the collection $\{F(G/H): ({\mathfrak g}^H)^{\otimes n} \rightarrow A(G/H)\}$ of non-equivariant $n$-cochain of $\mathfrak g ^H,$ as $H$ varies over subgroups of $G,$ is invariant. Because, for every morphism $\hat{g} : G/H \rightarrow G/K$ of $O_G,$ we have $A(\hat{g}) \circ F(G/K) = F(G/H) \circ (\psi_g)^{\otimes n}$ by naturality of $F.$ Thus, we have a map 
$$\alpha_n : CL^n_G(\mathfrak g; A) \rightarrow S^n_G(\mathfrak g; A)$$ defined by $F \mapsto \alpha_n (F): = \{F(G/H)\}$ for all $n.$ We claim that $\alpha = \{\alpha_n\}$ is a cochain map. Let $F \in CL^n_G(\mathfrak g; A).$ Then, $(\delta \circ \alpha)(F) = \{\delta_H F(G/H)\}.$ On the other hand, $(\alpha \circ \delta)(F) = \alpha (\delta F) = \alpha (F\circ \underline{d}) = \{(F\circ \underline{d})(G/H)\} = \{F(G/H) \circ d_H\}.$ Since $\delta_H F(G/H) = F(G/H) \circ d_H$ the result follows. Thus, $\alpha$ induces a homomorphism
$$\bar{\alpha}_n : HL^n_G(\mathfrak g; A) \rightarrow H_n(S^\sharp_G(\mathfrak g ; A))$$ for all $n$.

Finally, observe that the cochain map $\alpha = \{\alpha_n\}$ is a cochain isomorphism with inverse $\beta =\{\beta_n\}$ defined as follows:
$$\beta : S^n _G(\mathfrak g; A) \rightarrow CL^n_G(\mathfrak g; A), ~~\{c_H\}\mapsto C,$$ where the natural transformation $C: \underline{CL}_n(\mathfrak g) \rightarrow A$ is given by $C(G/H) := c_H.$ The naturality of $C$ follows from the invariance of $\{c_H\}.$ Thus, $\bar{\alpha}_n$ is an isomorphism for all $n$.   
\end{proof}

\section{Equivariant Leibniz cohomology as zinbiel algebra}

In \cite{L3}, J.-L. Loday introduced zinbiel algebras and proved that for any Leibniz algebra $\mathfrak g$ the graded Leibniz cohomology with coefficients in a commutative, associative algebra admits a graded product which makes it a graded zinbiel algebra. The aim of this section is to prove an equivariant version of this result. Explicitly, for a Leibniz algebra $\mathfrak g$ equipped with an action of $G$, we prove that equivariant graded Leibniz cohomology $HL^\ast_G(\mathfrak g, A)$ as introduced in the previous section also admits a graded zinbiel algebra structure. To show this, we use the equivalent formulation (Theorem \ref{equivalent-formulation}) of equivariant cohomology to define a cup-product operation which is based on the product defined at the cochain level of the fixed points Leibniz algebras $\{\mathfrak g^H\}_{H< G}.$      

Recall the following definition from \cite{L3, L4}.
\begin{defn} 
A dual Leibniz algebra or a Zinbiel algebra is a $\mathbb{K}$-vector space $R$ equipped with a bilinear map
$$(--): R \times R \rightarrow R$$ satisfying the relation
\begin{align}\label{zinbiel-relation}
((rs)t) = (r(st)) + (r(ts)),  \forall~r, s, t \in R.
\end{align} 
A graded Zinbiel algebra is a graded $\mathbb{K}$-vector space $R$ equipped with a graded bilinear map 
$$(--): R \times R \rightarrow R$$ satisfying the relation
\begin{align}\label{graded-zinbiel-relation}
((rs)t) = (r(st)) + (-1)^{|t||s|}(r(ts)), 
\end{align}
for all homogeneous elements $r, s, t \in R.$ 
\end{defn}

Let $S_n$ be the permutation group of $n$ elements $1, \ldots, n.$ A permutation $\sigma \in S_n$ is called a $(p, q)$-shuffle if $p+q = n$ and 
$$\sigma (1) < \cdots <\sigma (p) ~~\mbox{and}~~~\sigma (p+1) < \cdots <\sigma (p+q).$$ 
In the group algebra $\mathbb K [S_n],$ let $sh_{p,q}$ be the element $$sh_{p,q}: = \sum_{\sigma} \sigma,$$ where the summation is over all $(p, q)\mbox{-shuffles}.$

For any vector space $V$ we let $\sigma \in S_n$ act on $V^{\otimes n}$ by
$$\sigma(v_1\ldots v_n) = (v_{\sigma^{-1}(1)}\ldots v_{\sigma^{-1}(n)}),$$ where the generator $v_1\otimes \cdots \otimes v_n$ of $V^{\otimes n}$ is denoted by $v_1\ldots v_n.$
The free Zinbiel algebra over the vector space $V$ is $\bar{T}(V) = \oplus_{n\geq 1}V^{\otimes n}$ equipped with the following product
$$(v_0\ldots v_p)(v_{p+1}\ldots v_{p+q}) = v_0sh_{p,q}(v_1\ldots v_{p+q})= (Id_1\otimes sh_{p,q})(v_0\ldots v_{p+q}).$$
Here $Id_1$ is the identity on the first factor.

Note that the linear map from $\mathbb K[S_n]$ to itself induced by $\sigma \mapsto \mbox{sgn}(\sigma)\sigma^{-1}$ for $\sigma \in S_n$ is an anti-homomorphism. Let us denote the image of $\alpha \in \mathbb K[S_n]$  under this map by $\tilde{\alpha}.$ 

Let $\mathfrak g$ be a Leibniz algebra equipped with a given action of a finite group $G.$ Let $\Phi \mathfrak g$ be the corresponding $O_G$-Leibniz algebra. Denote by $\Phi \mathfrak g^{\otimes p+q}$ the $O_G$-vector space given by  
$$\Phi \mathfrak g^{\otimes p+q}(G/H) =(\Phi \mathfrak g(G/H))^{\otimes p+q}= (\mathfrak g^H)^{\otimes p+q}$$ for objects $G/H$ in $O_G$ and for a morphism $\hat{g} : G/H \rightarrow G/K,$ $$\Phi \mathfrak g^{\otimes p+q}(\hat{g})= (\Phi \mathfrak g (\hat{g}))^{\otimes p+q}.$$

For any non-negative integers $p$ and $q$ we define a natural transformation
\begin{align}\label{equivariant-linear-map}
\rho_{p,q}: = Id_1\otimes \widetilde{sh}_{p-1,q} = \Phi \mathfrak g^{\otimes p+q} \rightarrow \Phi \mathfrak g^{\otimes p+q}.
\end{align}

Explicitly, for every object $G/H \in O_G,$ the linear map 
$$\rho_{p,q}(G/H): (\mathfrak g ^H)^{\otimes p+q} \rightarrow (\mathfrak g ^H)^{\otimes p+q}$$ is given by
\begin{align}\label{explicit-above-map}
\rho_{p,q}(G/H)(x_1\ldots x_{p+q})= \sum_\sigma \mbox{sgn}(\sigma)(x_1x_{\sigma (2)}\ldots x_{\sigma (p+q)}),
\end{align}
where the above sum is over all $(p-1, q)$-shuffles $\sigma$. 

Let $\tau_{p,q}:\Phi \mathfrak g^{\otimes p+q} \rightarrow \Phi \mathfrak g^{\otimes p+q}$ be the natural transformation defined as follows.
For any object $G/H$ in $O_G$ and for generators $x=v_1\ldots v_p \in (\mathfrak g ^H)^{\otimes p}$ and $y=v_{p+1}\ldots v_{p+q} \in (\mathfrak g ^H)^{\otimes q},$ 
$\tau_{p,q}(G/H)(xy) = yx$ with the obvious definition on morphisms in $O_G.$ Then, for non-negative integers $p,~~q,~~r,$ we have the following equality 
\begin{align}\label{relation-required-for-proof}
(\rho_{p,q}\otimes Id_r) \circ \rho_{p+q, r} = (Id_p\otimes \rho_{q,r} + (-1)^{rq}\circ \tau_{r,q} \circ \rho_{r,q})\circ \rho_{p,q+r}
\end{align}
(cf. \cite{L3}).

Let $A : O_G \rightarrow \text{ \bf Comm}$ be an $O_G$-algebra. Let the product in $A(G/H)$ is denoted by $\mu_H : A(G/H) \otimes A(G/H) \rightarrow A(G/H).$ Note that for every morphism $\hat{g}: G/H \rightarrow G/ K$ in $O_G$ corresponding to a sub conjugacy relation $g^{-1}Hg\subseteq K,$ $A(\hat{g}): A(G/K) \rightarrow A(G/H)$ is an algebra map and hence, we have 
$$\mu_H \circ (A(\hat{g})\otimes A(\hat{g})) = A(\hat{g}) \circ \mu_K.$$ In other words, 
$$\mu : A\times A \rightarrow A,~~\mu(G/H) = \mu_H$$ is a natural transformation. 
\begin{defn}\label{equivariant-cup-product}
For $c =\{c_H\} \in S^p_G(\mathfrak g; A),~p>0$ and $d =\{d_H\} \in S^q_G(\mathfrak g; A),~q>0,$ we define   $$c\cup d :=\{c_H \cup d_H\}=\{\mu_H\circ (c_H \otimes d_H) \circ \rho_{p,q}(G/H)\}.$$
\end{defn} 

Clearly, $c\cup d \in S^n (\mathfrak g; A).$ We claim that $c\cup d \in S^{p+q}_G(\mathfrak g; A).$ Thus, we need to prove that $c\cup d$ is invariant. Let $\hat{g} : G/H \rightarrow G/K,~~g^{-1}Hg \subset K$ be a morphism in $O_G$. We need to check that
$$(c_H\cup d_H)\circ (\psi_g)^{\otimes p+q} = A(\hat{g})\circ (c_K\cup d_K).$$
Note that
\begin{align*}
& A(\hat{g})(c_K\cup d_K)\\
& = A(\hat{g})\circ \mu_K\circ(c_K\otimes d_K)\circ \rho_{p,q}(G/K)\\
& = \mu_H \circ (A(\hat{g})\otimes A(\hat{g}))\circ (c_K\otimes d_K)\circ \rho_{p,q}(G/K)\\
& = \mu_H \circ (A(\hat{g})\circ c_K \otimes A(\hat{g})\circ d_K)\circ \rho_{p,q}(G/K)\\
& = \mu_H \circ ( c_H\circ (\psi_g)^{\otimes p}\otimes d_H\circ (\psi_g)^{\otimes q})\circ \rho_{p,q}(G/K)~~\mbox{(by invariance of $c$ and $d$)}\\
& = \mu_H \circ (c_H \otimes d_H) \circ (\psi_g)^{\otimes p+q}\circ \rho_{p,q}(G/K)\\
& = \mu_H \circ (c_H \otimes d_H) \circ \rho_{p,q}(G/H) \circ (\psi_g)^{\otimes p+q}~~\mbox{(by naturality of $\rho$)}\\
& = (c_H\cup d_H)\circ (\psi_g)^{\otimes p+q}.
\end{align*} 

Next, note that the cup-product operation is well-defined. This is because, 
\begin{align*}
\delta (c\cup d) & = \{\delta_H (c_H\cup d_H)\}\\
& = \{ \delta_H(c_H)\cup d_H + (-1)^{|c_H|}c_H\cup \delta_H(d_H)\}~~\mbox{(by the non-equivariant case)}\\
& = \{ \delta_H(c_H)\cup d_H\} + (-1)^{|c_H|}\{c_H\cup \delta_H(d_H)\}\\
& = \delta (c)\cup d + (-1)^{|c|}c \cup \delta (d)~~\mbox{( since $(-1)^{|c_H|} = (-1)^{|c|}$)}.
\end{align*}

Let $[a] \in HL^p_G(\mathfrak g; A),$ $[b] \in HL^q_G(\mathfrak g; A)$ and $[c] \in HL^r_G(\mathfrak g; A).$ We claim
\begin{align}\label{required-relation}
([a]\cup [b]) \cup [c] = [a]\cup ([b] \cup [c]) + (-1)^{qr}[a]\cup ([c] \cup [b]).
\end{align}
To prove the relation (\ref{required-relation}) we proceed as follows. We choose representative cocycles $a=\{a_H\}$, $b=\{b_H\}$ and $c= \{c_H\}.$ By Definition \ref{equivariant-cup-product}, $c\cup b :=\{c_H \cup b_H\}=\{\mu_H\circ (c_H \otimes b_H) \circ \rho_{r,q}(G/H)\}.$ Since $A(G/H)$ is commutative for every object $G/H \in O_G,$ we have
$$\mu_H\circ (c_H \otimes b_H) \circ \rho_{r,q}(G/H) = \mu_H\circ (b_H \otimes c_H) \circ\tau_{r,q}(G/H) \circ \rho_{r,q}(G/H).$$
We pre-compose $\mu_H\circ ((a_H \otimes b_H)\otimes c_H)$ on both sides of the relation (\ref{relation-required-for-proof}) for every object $G/H$ in $O_G$ to deduce 
\begin{align*}
& \mu_H\circ ((a_H \otimes b_H)\otimes c_H) \circ (\rho_{p,q}(G/H)\otimes Id_r)\circ \rho_{p+q, r}(G/H)\\
& = \mu_H\circ ((a_H \otimes b_H)\otimes c_H) \circ (Id_p\otimes \rho_{q,r}(G/H))\circ \rho_{p,q+r}(G/H)\\  
& + (-1)^{rq}\mu_H\circ ((a_H \otimes b_H)\otimes c_H)\circ (Id_p \otimes \tau_{r,q}(G/H) \circ \rho_{r,q}(G/H))\circ \rho_{p,q+r}(G/H).
\end{align*}
Thus, for every object $G/H$ in $O_G,$ we have
$$(a_H\cup b_H)\cup c_H = a_H (b_H\cup c_H) + (-1)^{rq}a_H(c_H\cup b_H).$$
Therefore, 
$$(a\cup b)\cup c = a (b\cup c) + (-1)^{|c||b|}a(c\cup b).$$

\begin{thm}
Given a Leibniz algebra $\mathfrak g$ equipped with an action of a finite group $G$ and an $O_G$-algebra $A,$ the graded equivariant cohomology $HL^\ast_G(\mathfrak g; A)$ is a graded zinbiel algebra with respect to the cup-product operation (\ref{equivariant-cup-product}).
\end{thm}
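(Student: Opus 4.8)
The plan is to collect the ingredients already assembled in the discussion preceding the statement, since once the cup-product of Definition \ref{equivariant-cup-product} is in hand the claim reduces to two assertions: that $\cup$ induces a well-defined graded bilinear operation on $HL^\ast_G(\mathfrak g; A)$, and that this operation satisfies the graded zinbiel relation (\ref{graded-zinbiel-relation}). By Theorem \ref{equivalent-formulation} it is harmless to work throughout with the invariant cochain complex $S^\sharp_G(\mathfrak g; A)$ in place of $CL^\sharp_G(\mathfrak g; A)$.

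First I would verify that $\cup$ passes to cohomology. For invariant cochains $c \in S^p_G(\mathfrak g; A)$ and $d \in S^q_G(\mathfrak g; A)$, the product $c\cup d$ lies again in $S^{p+q}_G(\mathfrak g; A)$: this follows from the naturality of $\rho_{p,q}$ together with the compatibility $\mu_H\circ(A(\hat{g})\otimes A(\hat{g})) = A(\hat{g})\circ\mu_K$, exactly as checked above. Moreover $\delta(c\cup d) = \delta(c)\cup d + (-1)^{|c|}c\cup\delta(d)$, which reduces componentwise to the non-equivariant Leibniz rule for each fixed-point algebra $\mathfrak g^H$, since $|c_H| = |c|$ for every $H$. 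These two facts show that a product of cocycles is a cocycle whose class depends only on the classes of the factors, so $\cup$ descends to a graded bilinear map on $HL^\ast_G(\mathfrak g; A)$.

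Next I would establish the graded zinbiel relation. The heart of the matter is the identity (\ref{relation-required-for-proof}) of natural transformations relating $\rho_{p,q}$ and $\tau_{r,q}$, which is the orbit-category lift of Loday's shuffle identity. Precomposing both sides with $\mu_H\circ((a_H\otimes b_H)\otimes c_H)$ over each object $G/H$, and using commutativity of $A(G/H)$ to write $\mu_H\circ(c_H\otimes b_H)\circ\rho_{r,q}(G/H) = \mu_H\circ(b_H\otimes c_H)\circ\tau_{r,q}(G/H)\circ\rho_{r,q}(G/H)$, yields the componentwise equality $(a_H\cup b_H)\cup c_H = a_H\cup(b_H\cup c_H) + (-1)^{rq}a_H\cup(c_H\cup b_H)$. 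Assembling over all subgroups $H<G$ gives relation (\ref{required-relation}) at the level of classes, which is precisely (\ref{graded-zinbiel-relation}) with $r=[a]$, $s=[b]$, $t=[c]$ and $|s|=q$, $|t|=r$. Combining the well-definedness of the product with this relation completes the proof.

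The step I expect to be the only real obstacle is the verification of (\ref{relation-required-for-proof}) as an identity of natural transformations, that is, the combinatorial bookkeeping of shuffles and signs that it encodes; everything else is formal naturality over $O_G$. Once (\ref{relation-required-for-proof}) is available, the equivariant statement follows from the non-equivariant theorem of \cite{L3} applied simultaneously in each $\mathfrak g^H$, the compatibility across subgroups being supplied by the invariance condition.
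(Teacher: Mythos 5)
Your proposal is correct and follows essentially the same route as the paper: checking invariance of $c\cup d$ and the derivation property of $\delta$ so that the product descends to $HL^\ast_G(\mathfrak g; A)$, then deducing the graded zinbiel relation componentwise over each $G/H$ by precomposing the shuffle identity (\ref{relation-required-for-proof}) with $\mu_H\circ((a_H\otimes b_H)\otimes c_H)$ and using commutativity of $A(G/H)$. The paper likewise treats (\ref{relation-required-for-proof}) as imported from Loday's non-equivariant argument rather than reproving it.
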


{\bf Acknowledgements:} The second author would like to thank Professor Pratulananda Das of Jadavpur University for his guidance and support.   

%\newpage
\mbox{ }\\

\providecommand{\bysame}{\leavevmode\hbox to3em{\hrulefill}\thinspace}
\providecommand{\MR}{\relax\ifhmode\unskip\space\fi MR }
% \MRhref is called by the amsart/book/proc definition of \MR.
\providecommand{\MRhref}[2]{%
  \href{http://www.ams.org/mathscinet-getitem?mr=#1}{#2}
}
\providecommand{\href}[2]{#2}

\mbox{ } \\

\end{document}